\theoremstyle{plain}
\newtheorem{theorem}{Theorem}
\newtheorem{lemma}[theorem]{Lemma}
\theoremstyle{definition}
\newtheorem{definition}[theorem]{Definition}
\theoremstyle{remark}
\newtheorem*{remark}{Remark}
\newtheorem*{acknowledgments}{Acknowledgments}
\newcommand{\Z}{\mathbb Z}    
\newcommand{\R}{\mathbb R}    
\newcommand{\PP}{\mathbb P}   
\newcommand{\F}{\mathcal F} 
\newcommand{\<}{\langle}   
\renewcommand{\>}{\rangle} 
\renewcommand{\Im}{\operatorname{Im}}
\newcommand{\ignore}[1]{\relax}
\newcommand{\dd}{\partial}
\newcommand{\Hom}{\operatorname{Hom}}
\newcommand{\rk}{\operatorname{rk}}
\newcommand{\OS}{\mathcal I}  
\begin{document}

\title[The Orlik-Solomon Algebra and the Bergman Fan]{The Orlik-Solomon Algebra and the Bergman Fan of a Matroid}

\author{Ilia Zharkov}
\address{Kansas State University, 138 Cardwell Hall, Manhattan, KS 66506}
\email{zharkov@math.ksu.edu}

\begin{abstract}
Given a matroid $M$ one can define its Orlik-Solomon algebra $OS(M)$ and the Bergman fan $\Sigma_0(M)$. On the other hand to any rational polyhedral fan $\Sigma$ one can associate its tropical homology and cohomology groups  $\F_\bullet(\Sigma)$, $\F^\bullet (\Sigma)$. We show that the projective Orlik-Solomon algebra $OS_0(M)$ is canonically isomorphic to $\F^\bullet (\Sigma_0(M))$. In the realizable case this provides a geometric interpretation of homology of the complement of the correspoding hyperplane arrangement in $\PP^n$.

%
\end{abstract}
\maketitle

\footnote{The research is partially supported by the NSF FRG grant DMS-0854989.

2010 AMS subject classification 14T05, 52B40}

\section{Notations and Statements}

\subsection{Tropical homology and cohomology}
Let $\Sigma=\bigcup \sigma\subset \R^N=\Z^N\otimes \R$ be an integral polyhedral fan.  For each cone $\sigma\subset \Sigma$ we denote by $\<\sigma\>_\Z$ the integral lattice in the vector subspace linearly spanned by $\sigma$. 
\begin{definition}\cite{IKMZ}
The {\em homology} group $\F_k(\Sigma)$ is the subgroup of $ \wedge^k\Z^N$ generated by the elements $v_1\wedge\dots \wedge v_k$, where all $v_1,\dots,v_k \in \<\sigma\>_\Z$ for some cone $\sigma\in\Sigma$. The {\em cohomology} is the dual group $\F^k(\Sigma):=\Hom(\F_k(\Sigma),\Z)$, which is the quotient of $\wedge^\bullet(\Z^N)^*$  by $(\F_k)^\perp$.
\end{definition}


\begin{lemma}
The wedge product on  $\wedge^\bullet(\Z^N)^*$ descends to $\F^\bullet$, that is, $\F^\bullet$ is endowed with a natural algebra structure over $\Z$.
\end{lemma}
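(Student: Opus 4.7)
The plan is to show that $(\F_\bullet(\Sigma))^\perp \subset \wedge^\bullet(\Z^N)^*$ is a two-sided graded ideal under the wedge product. Once this is established, the wedge descends to the quotient $\F^\bullet(\Sigma) = \wedge^\bullet(\Z^N)^*/(\F_\bullet)^\perp$ and endows it with a graded algebra structure over $\Z$. Because wedge on $\wedge^\bullet(\Z^N)^*$ is graded-commutative, it is enough to verify the left-ideal property: for $\alpha \in (\F_k)^\perp$ and arbitrary $\beta \in \wedge^m(\Z^N)^*$, one must have $\alpha \wedge \beta \in (\F_{k+m})^\perp$.

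To check this I would evaluate $\alpha \wedge \beta$ on an arbitrary generator of $\F_{k+m}(\Sigma)$. Such a generator takes the form $\xi = v_1 \wedge \cdots \wedge v_{k+m}$ where all $v_i$ lie in a single $\<\sigma\>_\Z$. Using the standard duality between wedge product and shuffle coproduct,
\[
\langle \alpha \wedge \beta,\, \xi\rangle \;=\; \sum_{(I,J)} \operatorname{sgn}(I,J)\, \langle \alpha, v_I\rangle\, \langle \beta, v_J\rangle,
\]
where $(I,J)$ ranges over $(k,m)$-shuffles of $\{1,\dots,k+m\}$ and $v_I := v_{i_1}\wedge\cdots\wedge v_{i_k}$. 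The crucial observation is that every sub-wedge $v_I$ is again a generator of $\F_k(\Sigma)$, since its component vectors all lie in the same subspace $\<\sigma\>_\Z$. Consequently $\langle \alpha, v_I\rangle = 0$ for every shuffle, and the whole sum vanishes. Since such $\xi$ span $\F_{k+m}(\Sigma)$, we conclude $\alpha \wedge \beta \in (\F_{k+m})^\perp$.

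I do not anticipate any serious obstacle. The entire argument is formal and turns on a single structural feature of the definition: the class of generators of $\F_\bullet(\Sigma)$ is closed under taking sub-wedges, precisely because membership is witnessed by a \emph{common} cone $\sigma$. The only technical nuisance is bookkeeping the signs in the shuffle expansion, which is routine multilinear algebra; nothing in the argument uses anything special about fans beyond the definition of $\F_\bullet$.
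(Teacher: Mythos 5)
Your proof is correct and takes essentially the same approach as the paper: both verify that $(\F_\bullet)^\perp$ is an ideal by expanding the pairing $\langle \alpha\wedge\beta,\,v_1\wedge\cdots\wedge v_{k+m}\rangle$ and observing that every sub-wedge of a generator of $\F_\bullet$ (all $v_i$ in a common $\<\sigma\>_\Z$) is again a generator, hence killed by $\alpha$. The only cosmetic difference is that you use the full shuffle expansion against a $\beta$ of arbitrary degree, while the paper multiplies by a degree-one form and relies on the exterior algebra being generated in degree one; both are valid.
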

\begin{proof}
We just need to show that the subgroup of $\wedge^\bullet(\Z^N)^*$ annihilating $\F_\bullet$ forms an ideal. Let $f\in (\F_k)^\perp$, then for any $\alpha\in (\Z^N)^*$ and any collection $v_0,v_1,\dots,v_k\in \<\sigma\>_\Z$ we have 
$$(\alpha\wedge f)(v_0\wedge v_1\wedge\dots \wedge v_k)=\sum_{i=0}^k (-1)^i \alpha(v_i) f(v_0\wedge \dots \hat v_i \dots \wedge v_k ),
$$
which vanishes since any $k$-subset of $v_0,v_1,\dots,v_k$ is also in $\<\sigma\>_\Z$.
Hence $\alpha \wedge f$ is in $(\F_{k+1})^\perp$.
\end{proof}

\subsection{The Bergman fan}
 Let $M$ be a loopless matroid of rank $n$ on the set 
$\{0,\dots,N\}$. Let $V$ be the rank $N+1$ free abelian group generated by elements $e_0,\dots,e_N$. Consider the simplicial fan $\Sigma(M) \subset V_\R$ built on the lattice of flats of $M$.
Namely, the rays of $\Sigma$ are along the vectors $e_J:=e_{j_1} +\dots +e_{j_k}$ for each flat $J=\{{j_1}, \dots ,{j_k}\}$. The $k$ dimensional cones of $\Sigma(M)$ are spanned by the $k$-tuples of  rays indexed by flags of flats of length $k$. We will also use the notation 
$$E_I:=e_{i_1} \wedge \dots \wedge e_{i_k}
$$ 
for any subset $\{i_1,\dots,i_k\}\subset M$. Note the distinction between $E_I$ and $e_I$.
We reserve letter $J$ to denote flats in $M$, while $I$ will be used for general subsets of $M$.

The Bergman fan is the quotient fan $\Sigma_0(M)$ of $\Sigma(M)$ at the ray $e_M$. Namely, it is defined like above by the lattice of proper flats of $M$ in the quotient lattice $V_0=V / \<e_0+\dots+e_N\>$. 

Sturmfels \cite{Sturmfels} noticed the importance of the Bergman fan in tropical geometry
where it represents a linear space. Later
Ardila and Klivans \cite{AK} studied its combinatorics and showed, among other things, that $\Sigma_0(M)$ is indeed a balanced fan of degree 1. 

\subsection{The Orlik-Solomon algebra} To the same matroid $M$ one can associate its Orlik-Solomon algebra $OS^\bullet(M)$ over $\Z$ defined below. In case $M$ is realizable by a hyperplane arrangement in $\PP^{n-1}$,  the projectivized version
$OS_0^\bullet(M)$ of this algebra calculates the cohomology of the complement of this arrangement. (See \cite{OT} for more details).

Let $W$ be the rank $N+1$ free abelian group generated by elements $f_0,\dots,f_N$. Then $OS^\bullet(M):=\wedge^\bullet W/\OS^\bullet$, where the Orlik-Solomon ideal $\OS$ is generated by the elements  
$$\partial(f_{i_0}\wedge f_{i_1}\wedge \dots \wedge f_{i_k}):=\sum_{s=0}^k (-1)^s f_{i_0}\wedge \dots \hat f_{i_s} \dots \wedge f_{i_k}, 
$$
for all dependent subsets $I=\{{i_0},{i_1}, \dots ,{i_k}\}$. We will use the notation 
$$F_I:=f_{i_0}\wedge f_{i_1}\wedge \dots \wedge f_{i_k}.
$$
The sign of $F_I$ depends on the order of $I$, so we assume that all subsets of $M$ are ordered.

The projective Orlik-Solomon algebra $OS_0^\bullet(M)$ is defined as follows. Let $W_0$ be the subgroup of $W$ generated by all differences $f_i-f_j$. Then we set $OS_0^\bullet(M):=\wedge^\bullet W_0/\OS^\bullet_0$, where $\OS_0$ is the restriction of $\OS$ to the subalgebra $\wedge^\bullet W_0 \subset \wedge^\bullet W$.

\begin{theorem}\label{lemma:OS}
Consider $V$ and  $W$ above as dual groups with the dual bases  $\{e_0,\dots,e_N\}$ and $\{f_0,\dots,f_N\}$. Then $\F_k (\Sigma(M))^\perp=\OS^k$.
\end{theorem}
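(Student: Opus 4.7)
My plan is to prove the two inclusions $\OS^k \subseteq \F_k(\Sigma(M))^\perp$ and $\F_k(\Sigma(M))^\perp \subseteq \OS^k$ separately. The forward inclusion is a direct matroid-theoretic calculation using the previous lemma; for the reverse I will pair any $\omega \in \F_k^\perp$ against carefully chosen test vectors indexed by the broken-circuit basis of $OS^k(M)$.

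\textbf{Forward inclusion.} By the preceding lemma, $\F_\bullet^\perp$ is an ideal in $\wedge^\bullet W$, so it suffices to verify $\partial F_I \in \F_{|I|-1}^\perp$ for every dependent $I$. Fix a cone $\sigma$ of $\Sigma(M)$ coming from a flag $J_1 \subsetneq \cdots \subsetneq J_l$ and set $S_j := J_j \setminus J_{j-1}$, so $\langle\sigma\rangle_\Z = \bigoplus_j \Z e_{S_j}$. Under the restriction map $\wedge^\bullet W \to \wedge^\bullet\langle\sigma\rangle^*$ the functional $f_i$ sends $e_{S_j}$ to $1$ if $i \in S_j$ and vanishes if $i \notin J_l$. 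A term-by-term expansion of $\partial F_I|_\sigma$ then shows it can be nonzero only when at most one element of $I$ lies outside $J_l$ while all remaining elements lie in pairwise distinct $S_j$'s; a short rank argument along the strictly increasing flag forces any such $I$ to be independent. Contrapositively, $\partial F_I|_\sigma = 0$ whenever $I$ is dependent.

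\textbf{Reverse inclusion.} Fix a linear order on $\{0,\ldots,N\}$ and recall (see \cite{OT}) that the monomials $F_I$ indexed by NBC (no-broken-circuit) subsets of size $k$ project to a $\Z$-basis of $OS^k(M) = \wedge^k W / \OS^k$. For each such NBC set $I = \{i_1 < \cdots < i_k\}$, I propose the test vector
\[
v_I := e_{S_1} \wedge \cdots \wedge e_{S_k} \in \F_k
\]
built from the \emph{reversed} closure flag $J_s := \mathrm{cl}\{i_k, i_{k-1}, \ldots, i_{k-s+1}\}$ with $S_s := J_s \setminus J_{s-1}$. The pairing $F_{I'}(v_I)$ vanishes unless $I'$ is a system of distinct representatives for $(S_1,\ldots,S_k)$, in which case it equals $\pm 1$. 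The key combinatorial claim, which I expect to be the main obstacle, is that $I$ is the \emph{only} NBC transversal. Supposing $I' \neq I$ is a transversal, let $s_0$ be the smallest index where $i'_{s_0} \neq i_{k-s_0+1}$; the unique circuit $C \subseteq \{i'_{s_0}\} \cup \{i_k,\ldots,i_{k-s_0+1}\}$ must contain $i_{k-s_0+1}$ (otherwise $i'_{s_0} \in \mathrm{cl}\{i_k,\ldots,i_{k-s_0+2}\}$, contradicting $i'_{s_0} \in S_{s_0}$). Then $\min C \in \{i'_{s_0}, i_{k-s_0+1}\}$: if $\min C = i_{k-s_0+1}$ the broken circuit $C \setminus \{\min C\}$ sits inside $I'$; if $\min C = i'_{s_0}$ it sits inside $I$, contradicting that $I$ is NBC. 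Either way $I'$ is not NBC.

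Granted the claim, the matrix $[F_{I'}(v_I)]$ indexed by NBC sets of size $k$ is diagonal with entries $\pm 1$. Any $\omega \in \F_k^\perp$ decomposes uniquely as $\omega = \sum_{I \text{ NBC}} c_I F_I + \eta$ with $\eta \in \OS^k \subseteq \F_k^\perp$ (from the forward inclusion); evaluating at each $v_{I'}$ forces $c_{I'} = 0$, so $\omega = \eta \in \OS^k$, completing the reverse inclusion.
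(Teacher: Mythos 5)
Your route is genuinely different from the paper's. The paper never mentions broken circuits: it reduces both inclusions to the top degree via the two structural lemmas $\F_k(M)=\Z\<\F_k(J)\>_{\rk(J)=k}$ and $\OS^k=\cap_{\rk(J)=k}\OS^k(J)$, and then proves the top-degree identity $\F_n=\<E_I\>\cap\Im(\dd^*)$ by induction on the rank of $M$. You instead import the NBC-basis theorem for $OS^\bullet(M)$ over $\Z$ and build, for each NBC set $I$, an explicit test vector $v_I\in\F_k$ from the reversed closure flag. Your reverse inclusion is correct as written: the flag $J_1\subsetneq\dots\subsetneq J_k$ does span a cone of $\Sigma(M)$, the pairing $F_{I'}(v_I)$ is a transversal count, and your uniqueness argument (the fundamental circuit $C$ must contain $i_{k-s_0+1}$, and $\min C$ yields a broken circuit inside $I'$ or inside $I$) is sound. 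What this buys is a short, non-inductive proof of the hard inclusion; what it costs is reliance on the nontrivial external fact that the NBC monomials form a $\Z$-basis of $OS^k$, which the paper's self-contained induction avoids.

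The forward inclusion, however, has a gap at the sentence claiming that a term-by-term expansion shows $\dd F_I|_\sigma$ can be nonzero only when all elements of $I\cap J_l$ lie in pairwise distinct $S_j$'s. Expanding term by term only tells you that some $F_{I\setminus i_s}$ is nonzero on a basis wedge, i.e.\ that $I\setminus i_s$ meets pairwise distinct blocks; it puts no constraint on where $i_s$ sits, and that weaker condition does not force $I$ independent. Concretely, in the paper's Example 1 take the circuit $I=\{1,2,3\}$ and the cone on the flag $\{1\}\subset\{1,2,3,4\}$, so $S_1=\{1\}$, $S_2=\{2,3,4\}$: the terms $F_{12}$ and $F_{13}$ of $\dd F_{123}$ each pair to $1$ against $e_1\wedge e_{\{2,3,4\}}$, even though $2$ and $3$ lie in the same block; the total pairing vanishes only because those two terms cancel. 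So the statement you need is about cancellation across terms, which your expansion does not address. The fix is short: $\dd$ on $\wedge^\bullet W$ is contraction with $e_M$, so $\dd F_I(e_{S_{j_1}}\wedge\dots\wedge e_{S_{j_m}})=F_I(e_M\wedge e_{S_{j_1}}\wedge\dots\wedge e_{S_{j_m}})=F_I(e_T\wedge e_{S_{j_1}}\wedge\dots\wedge e_{S_{j_m}})$ with $T=M\setminus(S_{j_1}\cup\dots\cup S_{j_m})$; nonvanishing then forces $I$ to have exactly one element in each of the disjoint blocks $T,S_{j_1},\dots,S_{j_m}$, and your rank argument along the flag (applied to these blocks, including the case of the element lying in $T$) shows such an $I$ is independent. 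With that repair the forward inclusion, and hence your whole argument, goes through.
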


An important corollary of this theorem is our main result:

\begin{theorem}\label{thm:OS}
There is a canonical isomorphism $\F^\bullet(\Sigma_0(M)) \cong OS_0^\bullet(M)$ of  graded algebras.
\end{theorem}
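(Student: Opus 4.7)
The plan is to bootstrap from Theorem \ref{lemma:OS} by tracking the quotient $q\colon V\twoheadrightarrow V_0 = V/\langle e_M\rangle$ and its dual inclusion. First I would identify $W_0$ with $V_0^*$: the generators $f_i-f_j$ are precisely the linear functionals vanishing on $e_M = e_0+\cdots+e_N$, so $W_0$ is the annihilator of $e_M$, and $\wedge^\bullet W_0 = \wedge^\bullet V_0^*$ embeds canonically in $\wedge^\bullet W = \wedge^\bullet V^*$. Under this embedding $\OS_0^k = \OS^k\cap \wedge^k W_0$ by definition, so the theorem reduces to the identity $\F_k(\Sigma_0(M))^\perp = \OS_0^k$ inside $\wedge^k W_0$.

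The main technical step is the surjection $q_* \colon \F_k(\Sigma(M)) \twoheadrightarrow \F_k(\Sigma_0(M))$. Given a cone $\bar\sigma$ of $\Sigma_0(M)$ corresponding to a flag of proper flats $J_1 \subsetneq \cdots \subsetneq J_m$, I would extend it by the top flat to the cone $\sigma$ of $\Sigma(M)$ spanned by $e_{J_1},\ldots,e_{J_m}, e_M$. Since $e_M\in\R\sigma$ and $\ker q\otimes \R = \R e_M$, the preimage $q^{-1}(\R\bar\sigma)$ equals $\R\sigma$, so any integral lift of $\bar x \in \langle\bar\sigma\rangle_\Z$ automatically lies in $V\cap \R\sigma = \langle\sigma\rangle_\Z$. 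This lifts every generator $\bar v_1\wedge\cdots\wedge \bar v_k$ of $\F_k(\Sigma_0(M))$ to a generator of $\F_k(\Sigma(M))$ supported on $\sigma$.

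Taking annihilators closes the argument. For $\phi\in\wedge^k W_0$, its evaluation on $v_1\wedge\cdots\wedge v_k\in\wedge^k V$ depends only on $q_*(v_1\wedge\cdots\wedge v_k)$, so combined with the surjection above, $\phi$ annihilates $\F_k(\Sigma_0(M))$ iff the same $\phi$, viewed in $\wedge^k W$, annihilates $\F_k(\Sigma(M))$; by Theorem \ref{lemma:OS} this is equivalent to $\phi\in \OS^k$. Intersecting with $\wedge^k W_0$ gives $\F_k(\Sigma_0(M))^\perp = \OS_0^k$, hence $\F^k(\Sigma_0(M)) = \wedge^k W_0 / \OS_0^k = OS_0^k(M)$. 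The algebra structures agree because the identification is induced in each degree by the wedge-compatible inclusion $\wedge^k W_0 \hookrightarrow \wedge^k W$, and the earlier Lemma guarantees the wedge product descends compatibly on the $\F^\bullet$ side. I expect the main subtlety to be the lattice-level surjectivity above: appending $e_M$ to the span of $\sigma$ is essential, since the naive cone on $e_{J_1},\ldots,e_{J_m}$ alone need not surject onto $\langle\bar\sigma\rangle_\Z$ integrally, and without this surjection the annihilator argument would yield only an inclusion rather than equality.
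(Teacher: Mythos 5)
Your proposal is correct and follows essentially the same route as the paper: Theorem \ref{thm:OS} is deduced from Theorem \ref{lemma:OS} by identifying $W_0$ with the dual of $V_0=V/\langle e_M\rangle$ and restricting the duality $\F_\bullet(\Sigma(M))^\perp=\OS^\bullet$ to $\wedge^\bullet W_0$. The only difference is one of detail: the paper asserts the identification ``clearly extends,'' whereas you spell out the lattice-level surjection $\F_k(\Sigma(M))\twoheadrightarrow\F_k(\Sigma_0(M))$ obtained by appending $e_M$ to each flag of proper flats, which is precisely the point needed to upgrade the inclusion of annihilators to an equality.
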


\section{Two examples}
Example 1: Matroid $M_1$ on 4 elements of rank 2 represented by 4 lines in $\PP^2$ (see Fig. \ref{fig:1}).
 \begin{figure}[htb]
       \includegraphics[height=30mm]{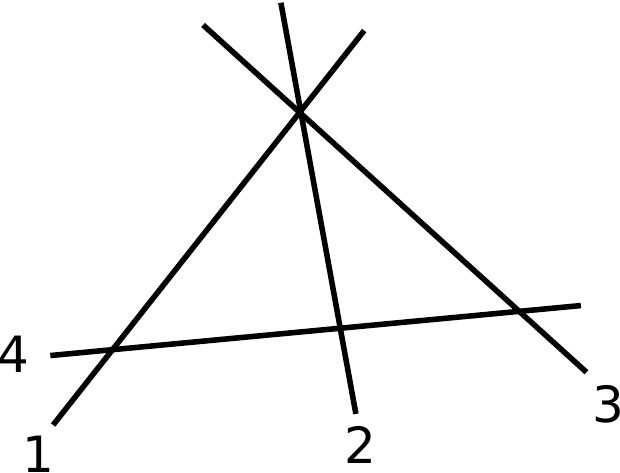}
   \qquad     
     \includegraphics[height=40mm]{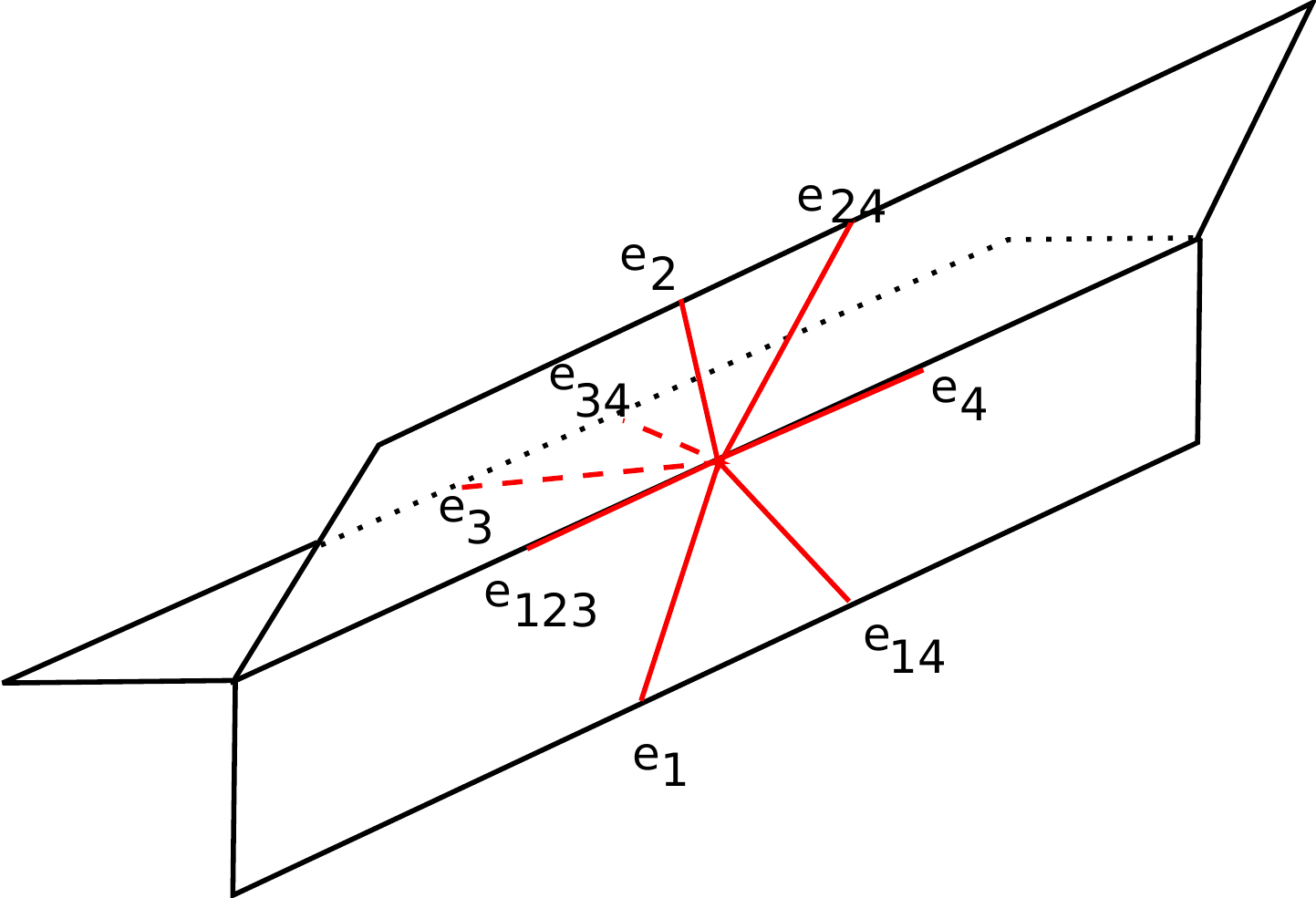} 
        \caption{Matroid $M_1$ and its Bergman fan.}\label{fig:1}
   \end{figure}
  
   The flats are: 
\begin{gather*}
1234\\
123 \quad 14 \quad 24 \quad 34\\
 1 \quad  2 \quad  3 \quad 4
\end{gather*}
and the only circuit is $123$. Thus the 
Orlik-Solomon ideal is generated by $\partial F_{123}=F_{12}+F_{23}+F_{31}$. On the other hand $\F_2 (\Sigma_0 (M_1)) \cong \Z^2$ is generated by $E_{i4}=e_i\wedge e_4, \ i=1,2,3$. It is clear that $F_{12}+F_{23}+F_{31}$ is the only (upto scalar) orthogonal bivector to all $E_{i4}=e_i\wedge e_4, \ i=1,2,3$.

Example 2: Matroid $M_2$ on 6 elements of rank 2 represented by 6 lines in $\PP^2$ (see Fig. \ref{fig:2}). It is isomorphic to the graphical matroid for the complete graph $K_4$.

 \begin{figure}[htb]
       \includegraphics[height=30mm]{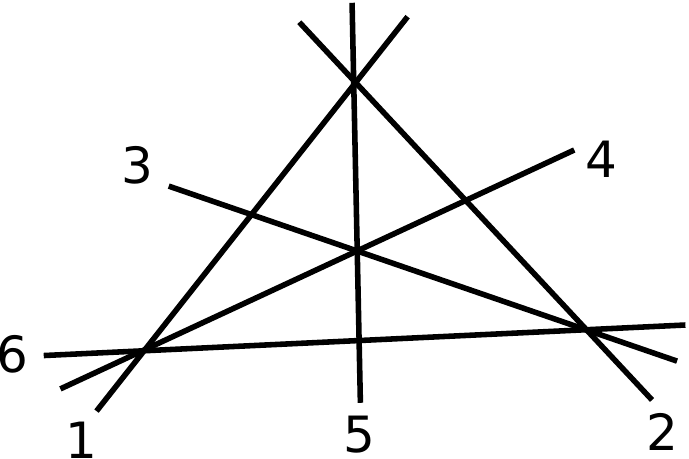}
   \qquad     
     \includegraphics[height=40mm]{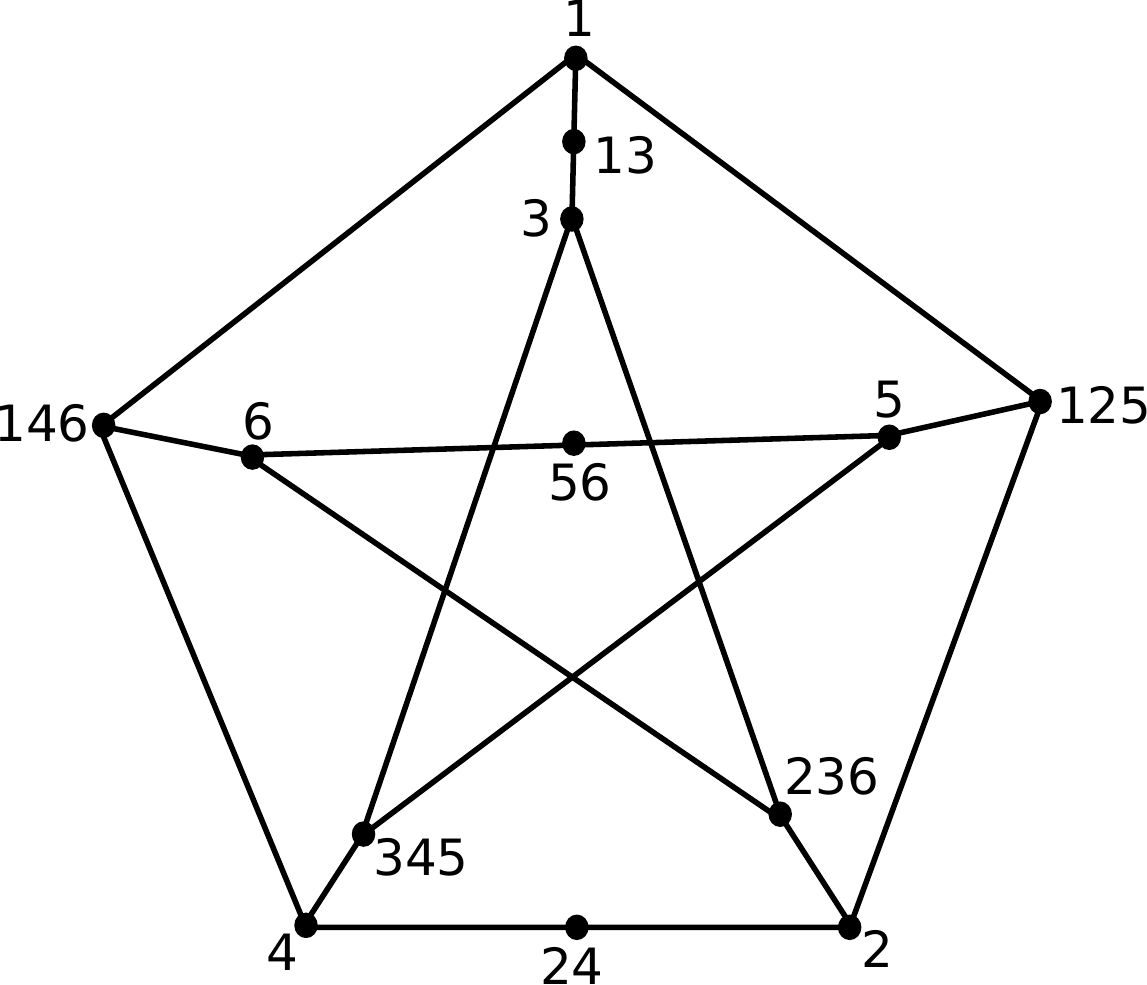} 
        \caption{Matroid $M_2$.  Its Bergman fan in $\R^5$ is combinatorially the cone over the (subdivided) Petersen graph.}\label{fig:2}
   \end{figure}

The flats are: 
\begin{gather*}
123456\\
125 \quad 13 \quad 146 \quad 24 \quad 236 \quad 56 \quad 345\\
 1 \quad  2 \quad  3 \quad 4 \quad 5 \quad 6
\end{gather*}
and the circuits of rank 2 are $125$, $146$, $236$ and $345$.  There are also 3 circuits of rank 3: $1234$, $1356$ and $2456$. Thus the 
Orlik-Solomon ideal  in degree 2 is generated by 
\begin{gather*}\label{eq:ideal}
\partial F_{125}=F_{12}+F_{25}+F_{51}\\
\partial F_{146}=F_{14}+F_{46}+F_{61}\\
\partial F_{236}=F_{23}+F_{36}+F_{62}\\
\partial F_{345}=F_{34}+F_{45}+F_{53}
\end{gather*}
On the other hand $\F_2 (\Sigma_0 (M_2)) \cong \Z^6$ is generated by the 15 bivectors (one for each 2-dimensional cone) with 10 relations among them (one for each ray of $\Sigma_0(M_2))$). And there is a relation among the relations (the sum is tautologically 0 in $\Lambda^2 \Z^5$).
 One easily sees that all 15 bivectors are orthogonal to the Orlik-Solomon elements in $\OS_0^2(M_2)$ above. Counting dimensions we can conclude that  $\OS_0^2(M_2)\subset \Lambda^2 (\Z^{5})^*$ is the orthogonal subgoup to $\F_2 (\Sigma_0 (M_2))$.

\section{Proofs of Theorems \ref{lemma:OS} and  \ref{thm:OS}}
For a flat $J\subset M$ we consider the restricted groups $\F_k(J):= \F_k(\Sigma(J))$ as subgroups of $\wedge^\bullet V$ under the natural embedding $\wedge^\bullet (\Z \<e_j, {j\in J}\>)\subset \wedge^\bullet V$.
We also consider the restricted Orlik-Solomon algebra $OS^\bullet(J):=\wedge^\bullet W/\OS^\bullet (J)$ by defining the ideal $\OS^\bullet(J)\subset \wedge^\bullet W$ to be generated by the $\dd F_I$ with dependent $I\subset J$, and by the $f_i, i \not\in J$.

\begin{lemma}\label{lemma:Fk}
$\F_k(M)=\Z\< \F_k(J)\>_{\rk(J)=k}$.
\end{lemma}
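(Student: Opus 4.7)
The plan is to prove the two inclusions separately. The inclusion $\Z\langle \F_k(J)\rangle_{\rk(J)=k}\subseteq \F_k(M)$ is immediate: for any flat $J$, every cone of $\Sigma(J)$ is also a cone of $\Sigma(M)$ under the embedding $\wedge^\bullet(\Z\langle e_j,\,j\in J\rangle)\subset\wedge^\bullet V$, so $\F_k(J)\subseteq \F_k(M)$ as subgroups of $\wedge^k V$.

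For the reverse inclusion, I first observe that $\F_k(M)$ is generated in $\wedge^k V$ by the elements $e_{J_1}\wedge\cdots\wedge e_{J_k}$ indexed by chains $J_1\subsetneq\cdots\subsetneq J_k$ of $k$ distinct flats of $M$. Indeed, for the cone of a flag $J_1\subsetneq\cdots\subsetneq J_\ell$ the integral lattice has the layer vectors $e_{J_i\setminus J_{i-1}}$ (with $J_0=\emptyset$) as a $\Z$-basis, and the change of basis to the ray vectors $e_{J_i}$ is triangular with unit diagonal; so any $k$-wedge expands as a $\Z$-combination of $k$-wedges of distinct $e_{J_i}$'s, i.e., of chain generators. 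Since ranks along a strict chain of flats are strictly increasing we have $\rk(J_k)\geq k$, and when $\rk(J_k)=k$ each $J_i$ is automatically a flat of $M|_{J_k}$, so the wedge lies in $\F_k(J_k)$.

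The substantive step is to reduce the case $\rk(J_k)>k$ to the case of equality. I pick the smallest index $i$ with $\rk(J_i)>i$; by minimality $\rk(J_{i-1})=i-1$ (with $J_0=\emptyset$). The key combinatorial input is that the rank-$i$ flats $F$ of $M$ satisfying $J_{i-1}\subsetneq F\subseteq J_i$ are in bijection with the rank-$1$ flats of the loopless contraction $M/J_{i-1}$ contained in $J_i\setminus J_{i-1}$, and these rank-$1$ flats partition $J_i\setminus J_{i-1}$. Writing $F_1,\dots,F_m$ for such flats, the partition yields
\[
 e_{J_i}\;=\;(1-m)\,e_{J_{i-1}}+\sum_{l=1}^m e_{F_l}.
\]
Substituting into the wedge product, the term with $e_{J_{i-1}}$ vanishes by antisymmetry and
\[
 e_{J_1}\wedge\cdots\wedge e_{J_k}\;=\;\sum_{l=1}^m e_{J_1}\wedge\cdots\wedge e_{J_{i-1}}\wedge e_{F_l}\wedge e_{J_{i+1}}\wedge\cdots\wedge e_{J_k}.
\]
Each summand is attached to a new chain in which position $i$ now carries a flat of rank exactly $i$, with all other positions unchanged, so the smallest bad index strictly increases. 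After at most $k$ iterations the chain satisfies $\rk(J_l)=l$ for every $l$, placing us in the already handled case.

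The main obstacle, and the only place matroid structure intervenes, is the partition claim. It rests on two standard facts: since $J_{i-1}$ is a flat, the contraction $M/J_{i-1}$ is loopless and its rank-$1$ flats partition its ground set; and since $J_i$ is a flat, $\mathrm{cl}_M(J_{i-1}\cup\{a\})\subseteq J_i$ for every $a\in J_i\setminus J_{i-1}$, so the closures of such augmentations realize exactly the rank-$i$ flats fitting strictly between $J_{i-1}$ and $J_i$, with the partition of $J_i\setminus J_{i-1}$ inherited from the partition of the ground set of $M/J_{i-1}$ into parallel classes.
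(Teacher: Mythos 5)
Your proof is correct and follows essentially the same route as the paper: the heart of both arguments is the identity $e_{J_{i-1}}\wedge e_{J_i}=\sum_l e_{J_{i-1}}\wedge e_{F_l}$ coming from the partition of $J_i\setminus J_{i-1}$ by the rank-$(i)$ flats between $J_{i-1}$ and $J_i$, iterated until every flat in the flag has rank equal to its position. You simply make explicit some steps the paper leaves implicit (that chain generators span $\F_k$, the contraction argument for the partition, and the ``smallest bad index'' bookkeeping of the induction).
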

\begin{proof}
Let $J \subset J''$ be two flats whose ranks differ by 2 or more. Let $J'_1, \dots J'_s$ be the set of flats between $J$ and $J''$ of rank exactly one larger than the rank of $J$. Then the sets $J, J'_1\setminus J, \dots, J'_s\setminus J$ give a partition of $J''$. Hence 
$$ e_{J} \wedge e_{J''}=\sum_{i=1}^s e_J \wedge e_{J'_i}.$$
By induction, for any $k$-flag of flats $J_1\subset \dots \subset J_k$ the element $e_{J_1}\wedge \dots \wedge e_{J_k}$ can be rewritten as a sum
$$e_{J_1}\wedge \dots \wedge e_{J_k}= \sum e_{J''_1}\wedge \dots e_{J''_k},$$
where all flags $J''_1\subset \dots \subset J''_k$ consist of flats of ranks $1,\dots, k$, respectively. 
\end{proof}

\begin{lemma}\label{lemma:Leibnitz}
As an abelian group $\OS^k = \Z\<F_{I'}, \dd F_{I''}\>$, where $I'$ and $I''$ run over dependent sets in $M$ of size $k$ and $k+1$, respectively. In particular, in the top degree $\OS^n = \Z\<F_I , \Im \{\dd : \wedge^{n+1} W \to \wedge^n W\}\>=\Z\<F_I , \ker \{\dd : \wedge^{n} W \to \wedge^{n-1} W\}\>$, where $I$ runs over dependent sets of size $n$.
\end{lemma}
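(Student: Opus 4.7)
I would establish the three equalities in the statement in order. For the first, the inclusion $\Z\langle F_{I'}, \dd F_{I''}\rangle \subseteq \OS^k$ is immediate for the $\dd F_{I''}$ by definition of the ideal, and for $F_{I'}$ (with $I'$ dependent of size $k$) one applies the Leibniz identity
$$\dd(f_i \wedge F_{I'}) = F_{I'} - f_i \wedge \dd F_{I'}$$
with any $i \in I'$: since $f_i \wedge F_{I'} = 0$, this collapses to $F_{I'} = f_i \wedge \dd F_{I'}$, which manifestly lies in $\OS$.

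For the reverse inclusion, note that $\OS^k$ is $\Z$-spanned by products of the form $F_J \wedge \dd F_I$ with $|J| = j$, $|I| = k+1-j$, and $I$ dependent. I would induct on $j$, uniformly in $k$. The case $j = 0$ is tautological. For $j \ge 1$, pick $i \in J$ and write $F_J = \pm f_i \wedge F_{J \setminus \{i\}}$. By the induction hypothesis the element $F_{J \setminus \{i\}} \wedge \dd F_I$ (of degree $k-1$) decomposes as a $\Z$-combination of $F_{I'}$ with $I'$ dependent of size $k-1$ and $\dd F_{I''}$ with $I''$ dependent of size $k$. Left-multiplication by $f_i$ sends $F_{I'}$ either to $0$ or to $\pm F_{\{i\} \cup I'}$ (a dependent set of size $k$), and sends $\dd F_{I''}$ to $F_{I''} \pm \dd F_{I'' \cup \{i\}}$ by the same Leibniz identity, i.e.\ to a sum of a dependent $F$-monomial of size $k$ and (when $i \notin I''$) a $\dd F$ of a dependent set of size $k+1$. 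All resulting terms lie in $\Z\langle F_{I'}, \dd F_{I''}\rangle^k$, completing the induction.

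For the ``in particular'' statement in top degree $k = n$: since $\rk M = n$, every subset of size $n+1$ is dependent, so the generators $\dd F_{I''}$ with $|I''| = n+1$ dependent are exactly all of $\Im\{\dd : \wedge^{n+1} W \to \wedge^n W\}$, yielding the first refinement. For the identification $\Im = \ker$ in degree $n$, I would invoke exactness of the complex $(\wedge^\bullet W, \dd)$ in positive degree: this is (up to the usual shift) the augmented simplicial chain complex of an $N$-simplex on the vertices $\{0, \dots, N\}$, which is acyclic via the contracting homotopy $\alpha \mapsto f_0 \wedge \alpha$.

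The main obstacle is bookkeeping: one must carefully track the order-dependent signs in the $F_I$ notation and verify that the property ``dependent'' is preserved as one enlarges the index set in the inductive step (which is automatic since a superset of a dependent set is dependent). The conceptual engine is the single Leibniz identity above, which lets one trade an $F_I$ term against a $\dd F_{I \cup \{i\}}$ term while remaining within the span of generators indexed by dependent sets.
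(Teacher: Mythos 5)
Your proof is correct and takes essentially the same route as the paper's: both rest on the Leibniz identity $\dd(F_{I'}\wedge F_{I''})=\dd F_{I'}\wedge F_{I''}\pm F_{I'}\wedge \dd F_{I''}$ to trade $F$-terms against $\dd F$-terms of dependent sets, and on acyclicity of $(\wedge^\bullet W,\dd)$ for the top-degree identification $\Im\dd=\ker\dd$. Your induction on the number of extra factors $f_i$ merely spells out in detail the reduction that the paper dispatches with a one-line appeal to the Leibniz rule.
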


\begin{proof}
The statement follows from the Leibnitz rule:
$$\partial F_{I'}\wedge F_{I''}=\partial ( F_{I'}\wedge F_{I''}) \pm F_{I'}\wedge \partial F_{I''}.
$$
For the top degree note that every $(n+1)$-set is dependent and $(\wedge^\bullet W, \dd)$ is an acyclic complex, that is $ \Im \dd=\ker \dd $.
\end{proof}

\begin{remark}
For the second subset of generators it is enough to take $\dd F_I$ with rank of $I$ exactly $k$, since $\dd F_I$ with $I$ of smaller ranks are already included in the first subset of generators.
\end{remark}

\begin{remark} 
Note that the projective Orlik-SOlomon ideal $\OS_0$ is generated as an abelian group just by the $\partial F_I$ for dependent $I$. Indeed, note that  $\wedge^\bullet W_0 = \Im \{\dd : \wedge^\bullet W \to \wedge^\bullet W\} = \ker \{\partial: \wedge^\bullet W \to \wedge^\bullet W\}$. But from the Lemma \ref{lemma:Leibnitz} if $\alpha\in \OS$ we can write $\alpha=\sum F_{I'} + \sum \dd F_{I''}$, with all $I',I''$ dependent sets. On the other hand $\dd \alpha=\dd (\sum F_{I'}) =0$ means $\sum F_{I'} = \dd \sum F_{\hat I}$, where every $\hat I$ is an extension by one element of some dependent $I$, and hence is also dependent.
\end{remark}

\begin{lemma}\label{lemma:OSk}
$\OS^k =\cap_{\rk(J)=k} \OS^k(J)$.
\end{lemma}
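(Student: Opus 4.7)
My plan is to use the projection $\pi_J \colon \wedge^\bullet W \to \wedge^\bullet W_J$ sending $f_i \mapsto 0$ for $i \notin J$, where $W_J := \Z\langle f_j : j \in J\rangle$. Since $\OS^\bullet(J)$ is the ideal generated by $\partial F_I$ ($I \subset J$ dependent) together with $f_i$ ($i \notin J$), we have $\ker \pi_J \subset \OS^\bullet(J)$, so $\OS^\bullet(J) = \pi_J^{-1}\bigl(\OS^\bullet(J) \cap \wedge^\bullet W_J\bigr)$, and this intersection is the ideal of $\wedge^\bullet W_J$ generated by $\partial F_I$ for $I \subset J$ dependent (i.e.\ the OS ideal of the matroid restricted to $J$). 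The claim therefore reduces to the statement: $\alpha \in \OS^k$ iff $\pi_J(\alpha) \in \OS^k(J) \cap \wedge^k W_J$ for every rank-$k$ flat $J$.

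For the forward inclusion $\OS^k \subset \bigcap_{\rk J = k} \OS^k(J)$, I verify each generator from Lemma~\ref{lemma:Leibnitz}. For $F_{I'}$ with $I'$ dependent of size $k$: if $I' \not\subset J$, then $F_{I'} \in \ker \pi_J$; if $I' \subset J$, then $|I'| = \rk(J) = k$ is the top degree of the restricted matroid, and Lemma~\ref{lemma:Leibnitz} applied there places $F_{I'}$ directly in $\OS^k(J) \cap \wedge^k W_J$. For $\partial F_{I''}$ with $I''$ dependent of size $k+1$: if $I'' \subset J$ it is itself a generator, and if $|I'' \setminus J| \geq 2$ then every term of $\partial F_{I''}$ lies in $\ker \pi_J$. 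The delicate subcase is $I'' \setminus J = \{i\}$, where $\pi_J(\partial F_{I''}) = \pm F_{I'' \setminus \{i\}}$; here I argue that $I'' \setminus \{i\}$ must itself be dependent, because otherwise it would be a basis of the rank-$k$ flat $J$, and the dependence of $I''$ would then force $i \in \overline{I'' \setminus \{i\}} = J$, contradicting $i \notin J$. Lemma~\ref{lemma:Leibnitz} then applies in the restricted matroid.

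For the reverse inclusion, take $\alpha \in \bigcap_J \OS^k(J)$ and expand $\alpha = \sum_I c_I F_I$ over $k$-subsets $I$. The subsum over dependent $I$ already lies in $\OS^k$ by Lemma~\ref{lemma:Leibnitz}, so I may assume every $I$ in the support is independent of size $k$. Each such $I$ is contained in a unique rank-$k$ flat, namely its closure $\bar I$, which lets me regroup $\alpha = \sum_{\rk J = k} \alpha_J$ with $\alpha_J := \sum_{\bar I = J} c_I F_I$. A direct check gives $\pi_{J_0}(\alpha) = \alpha_{J_0}$, since an independent $k$-set $I \subset J_0$ satisfies $\bar I = J_0$ (both are rank-$k$ flats). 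The hypothesis then forces $\alpha_J \in \OS^k(J) \cap \wedge^k W_J$, which lies inside $\OS^k$ because its generators $\partial F_I$ ($I \subset J$ dependent) are also generators of $\OS^\bullet$; summing over $J$ gives $\alpha \in \OS^k$. The principal obstacles are the $|I'' \setminus J| = 1$ subcase in the forward direction and the closure-based regrouping in the reverse direction, both of which rest on the same key fact: an independent $k$-set and a rank-$k$ flat containing it have equal closures.
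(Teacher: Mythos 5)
Your proof is correct and takes essentially the same route as the paper: for the inclusion $\OS^k\subset\OS^k(J)$ you check the abelian-group generators from Lemma~\ref{lemma:Leibnitz} with the same case analysis (including the same closure/rank argument in the $|I''\setminus J|=1$ subcase), and for the converse you group the independent terms by their closure flats and reduce to the top-degree statement of Lemma~\ref{lemma:Leibnitz} for the matroid restricted to $J$. Your projection $\pi_J$ simply makes explicit the paper's implicit passage to the restricted matroid (its phrase that independent terms ``have to come from $\dd F_{\hat I}$''), a clarification rather than a different argument.
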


\begin{proof}
First we argue that for any rank $k$ flat $J$ we have  $\OS^k \subset  \OS^k(J)$. Indeed, according to  Lemma \ref{lemma:Leibnitz} and the first remark after it we just have to show that $\partial F_{\hat I} \in \OS^k(J)$ for $\hat I$ of size $k+1$ and rank $k$. If $\hat I \subset J$ or $|\hat I \setminus J|\ge 2$, we are done. Otherwise, say $|\hat I \setminus J|=\{s\}$. Then
$$\partial F_{\hat I}= f_s\wedge(\dots) \pm F_{\hat I\setminus s}.
$$
But $\hat I\setminus s \subset J$ must have rank $k-1$ (or, otherwise $\hat I \subset J$), hence is dependent. Thus $F_{\hat I\setminus s}$ is in $\OS^k(J)$, and so is $\partial F_{\hat I}$.
Consequently,  $\OS^k \subset \cap_{\rk(J)=k} \OS^k(J)$. 

To show the converse we notice that each $I$ is contained in a unique flat of the same rank (the matroidal closure of $I$). We group the terms in an element $\alpha=\sum F_I \subset\wedge^{k} W$ by their flats: 
$$\alpha=\alpha_{< k}+\sum_{\rk(J')=k} \alpha_{J'}$$
where $\alpha_{< k}$ contains terms $F_I$ with dependent $I$. 

Now if $\alpha\in \OS^k(J)$ for some rank $k$ flat $J$, then in the above decomposition $\alpha_{< k} \in \OS^k \subset  \OS^k(J)$. Also all $\alpha_{J'}$ with $J'\ne J$, are in $\OS^k(J)$, and hence so is $\alpha_J$. But all terms $F_I$ in $\OS^k(J)$ with independent $I\subset J$ have to come from $\dd F_{\hat I}$ for some dependent $\hat I\subset J$. Thus $\alpha_J \in \OS^k$. Taking the intersection over all  $k$-flats completes the proof.
\end{proof}

\begin{proof}[Proof of Theorem \ref{lemma:OS}]
Taking the intersection in Lemma \ref{lemma:OSk} is orthogonal to taking the sum in Lemma \ref{lemma:Fk}. Thus it is enough to prove the statement in the top degree for any matroid. By dualizing  the top degree part of Lemma \ref{lemma:Leibnitz}
it suffices then to show that $\F_n = \<E_I\>\cap \Im (\dd^*)$, where the $I$ run over independent $n$-sets of $M$ and $\dd^*: \wedge^{n-1} V \to \wedge^n V, \  \dd^*(E_I)=e_M\wedge E_I$ is the adjoint operator to $\dd:\wedge^{n} W \to \wedge^{n-1}W$.

For any complete flag of flats $J_1\subset\dots \subset J_{n-1}\subset M$ the polyvector
$$e_{J_1}\wedge \dots \wedge e_M=e_{J_1}\wedge e_{J_2\setminus J_1}\wedge \dots \wedge e_{M\setminus J_{n-1}}=\sum E_I
$$ 
contains only terms with independent $I$. Thus $\F_n\subset \<E_I\>\cap \Im (\dd^*)$.
 We will show the converse by induction on the rank of $M$. For rank 1 matroids both spaces are $\Z\<e_M\>$ and there is nothing to prove. 

Suppose now $\alpha=e_M\wedge \beta = \sum E_{\hat I}$, with all $\hat I$ independent. We may choose a representation for $\beta=\sum E_I$ with all $I$ independent subsets as follows. Substituting, say, $e_0=-\sum_{i=1}^N e_i$ mod $e_M$ into $\beta$ we will have
$$\alpha=e_0 \wedge \beta+(\text{terms with no }e_0)$$
and any term $E_I$ with dependent $I$ in $\beta$ will result in $E_{0\cup I}$ in $\alpha$ with dependent $\hat I= 0\cup I$ which cannot happen.

Let $J_1,\dots, J_r$ be all rank $(n-1)$ flats in $M$. We again group the terms in $\beta$ by the respective flats $\beta= \beta_{J_1}+\dots+\beta_{J_r}$. Then writing 
$$\alpha= (e_{J_1} \wedge \beta_{J_1}+\dots+e_{J_r} \wedge \beta_{J_r})+
(e_{M \setminus J_1} \wedge \beta_{J_1}+\dots+e_{M\setminus J_r} \wedge \beta_{J_r})
$$
we note that both $\alpha$ and the second summand contain terms $E_I$ only with independent $I$. On the other hand, each $e_{J_k} \wedge \beta_{J_k}$ contains terms of rank $n-1$, and there no cancellations possible among different $k$. Thus $e_{J_k} \wedge \beta_{J_k}=0$. By exactness of the $(e_{J_k}\wedge)$-operator we can write each $\beta_{J_k}$ as $e_{J_k}\wedge \gamma_{J_k}$ and use the induction assumption.
\end{proof}

\begin{proof}[Proof of Theorem \ref{thm:OS}]
With the choice of the dual bases for $V$ and $W$ the restriction to $W_0$ in $W$ is exactly dual to the quotient by $e_M$ in $V$, and the duality extends to the exterior algebras. On the other hand, the identification in Theorem \ref{lemma:OS} clearly extends to the level of graded ideals $\F_\bullet (\Sigma(M))^\perp=\OS^\bullet$, as well as to their restrictions to $\wedge^\bullet W_0$. 
\end{proof}

\begin{acknowledgments}
This note was meant to serve as an appendix to our (long overdue) joint project on tropical homology \cite{IKMZ} with Ilia Itenberg, Ludmil Katzarkov and Grisha Mikhalkin. But since a shorter geometric argument will appear in the realizable case  in \cite{IKMZ} (all that is needed there) the algebraic proof given here makes now more sense as an independent article.
Of course, numerous discussions with all three coauthors were crucial for the formulation and the proof of the main statement. I am very grateful to them for their permission to publish the result as a separate paper. 
I would also like to thank Federico Ardila for several very helpful conversations. He is primerily responsible for my current appreciation of matroids. Finally I should mention that another inductive proof of Theorem  \ref{thm:OS} was recently given by Kristin Shaw in her thesis \cite{Shaw} using tropical modifications.
\end{acknowledgments}

\end{document}